\newtheorem{theorem}{Theorem}[section]
\newtheorem{lemma}[theorem]{Lemma}
\theoremstyle{definition}
\newtheorem{definition}[theorem]{Definition}
\theoremstyle{remark}
\numberwithin{equation}{section}
\begin{document}

\title[Smoothing estimate on Morrey spaces]{Smoothing estimate for the heat semigroup with a homogeneous weight on Morrey spaces}

\author{Naoya Hatano and Masahiro Ikeda}

\address[Naoya Hatano]{Graduate School of Information Science and Technology, Osaka Uni-
versity, 1-5, Yamadaoka, Suita-shi, Osaka 565-0871, Japan,}

\address[Masahiro Ikeda]{Graduate School of Information Science and Technology, Osaka Uni-
versity, 1-5, Yamadaoka, Suita-shi, Osaka 565-0871, Japan / Center for Advanced
Intelligence Project RIKEN, 1-4-1, Nihonbashi, Chuo-ku, Tokyo 103-0027, Japan.}

\email[Naoya Hatano]{n.hatano.chuo@gmail.com}

\email[Masahiro Ikeda]{ikeda@ist.osaka-u.ac.jp / masahiro.ikeda@a.riken.jp}

\begin{abstract}
We study the smoothing estimate for the heat semigroup which is related to the nonlinear term of the Hardy-H\'enon parabolic equation on Morrey spaces.
This result is improvement of \cite[Proposition 3.3]{Tayachi20}, which is proved by using weak Lebesgue and Lorentz spaces.
\end{abstract}

\keywords{
Heat semigroup,
Morrey spaces,
Lorentz spaces.
}

\subjclass[2020]{Primary 42B35; Secondary 42B25, 35K08}
\maketitle

\section{Introduction}

Looking ahead to analyzing the initial value problem
\begin{equation}\label{eq:HH}
\begin{cases}
\partial_tu-\Delta u=|x|^{-\gamma}|u|^{\alpha-1}u,
& (t,x)\in(0,\infty)\times{\mathbb R}^n, \\
u(0)=u_0,
\end{cases}
\tag{HH}
\end{equation}
where $u_0$ is a given function, and $\alpha>1$ and $\gamma>0$, we study the smoothing estimate for the heat semigroup with a homogeneous weight on Morrey spaces.
This equation is called Hardy-H\'enon parabolic equation, and the corresponding stationary problem was derived and studied by H\'enon \cite{Henon73}.
By Duhamel's principle, the integral equation can be given as follows:
\begin{equation}
u(t,x)
=
e^{t\Delta}u_0
+
\int_0^t
e^{(t-\tau)\Delta}[
|x|^{-\gamma}|u(\tau)|^{\alpha-1}u(\tau)
]
\,{\rm d}\tau,
\end{equation}
where $\{e^{t\Delta}\}_{t>0}$ is the heat semigroup which is defined by
\begin{equation}
e^{t\Delta}f(x)
\equiv
\frac1{\sqrt{(4\pi t)^n}}
\int_{{\mathbb R}^n}
\exp\left(-\frac{|x-y|^2}{4t}\right)
f(y)
\,{\rm d}y,
\quad
(t,x)\in(0,\infty)\times{\mathbb R}^n.
\end{equation}

In \cite{BTW17}, the local and global well-posedness in Lebesgue spaces are given.
Since the powered weight $|\cdot|^{-\gamma}$ belongs to the weak Lebesgue space $L^{n/\gamma,\infty}({\mathbb R}^n)$ and Lorentz space is generalization of the weak Lebesgue space, it is valid that one analyzes the equation \eqref{eq:HH} using the Lorentz spaces.
There are many works about well-posedness in the Lorentz spaces (see \cite{CITT24,Tayachi20} for example).

Ben Slimene, Tayachi and Weissler \cite{BTW17} provided the smoothing estimate for the heat semigroup as follows:
\begin{equation}\label{eq:Lp-Ls}
\|e^{t\Delta}[|\cdot|^{-\gamma}f]\|_{L^s}
\le C
t^{-\frac n2\left(\frac1p-\frac1s\right)-\frac\gamma2}
\|f\|_{L^p}
\end{equation}
when $n/s<\gamma+n/p<1$.
This estimate with the real interpolation implies the estimate
\begin{equation}\label{eq:wLp-wLs}
\|e^{t\Delta}[|\cdot|^{-\gamma}f]\|_{L^{s,\infty}}
\le C
t^{-\frac n2\left(\frac1p-\frac1s\right)-\frac\gamma2}
\|f\|_{L^{p,\infty}}
\end{equation}
appearing in \cite{Tayachi20} can be given by the real interpolation.
Moreover, Tayachi \cite{Tayachi20} provides the better smoothing estimate
\begin{equation}\label{eq:wLp-Ls}
\|e^{t\Delta}[|\cdot|^{-\gamma}f]\|_{L^s}
\le C
t^{-\frac n2\left(\frac1p-\frac1s\right)-\frac\gamma2}
\|f\|_{L^{p,\infty}}.
\end{equation}
In this paper, we give the improvement of the previous estimates by the proper embedding $L^{p,\infty}({\mathbb R}^n)\hookrightarrow{\mathcal M}^p_q({\mathbb R}^n)$.

\begin{definition}\label{def:wLp-Mpq}
Let $1\le q\le p<\infty$.
\begin{itemize}
\item[{\rm (1)}] The Lebesgue space $L^p({\mathbb R}^n)$ is defined as the space of all measurable functions $f$ with the finite norm
\[
\|f\|_{L^p}
\equiv
\left(
\int_{{\mathbb R}^n}|f(x)|^p\,{\rm d}x
\right)^{\frac1p}.
\]

\item[{\rm (2)}] The weak Lebesgue space $L^{p,\infty}({\mathbb R}^n)$ is defined as the space of all measurable functions $f$ with the finite quasi-norm
\[
\|f\|_{L^{p,\infty}}
\equiv
\sup_{t>0}t\|\chi_{\{x\in{\mathbb R}^n\,:\,|f(x)|>t\}}\|_{L^p}.
\]

\item[{\rm (3)}] The Morrey space ${\mathcal M}^p_q({\mathbb R}^n)$ is defined as the space of all measurable functions $f$ with the finite norm
\[
\|f\|_{{\mathcal M}^p_q}
\equiv
\sup_{x\in{\mathbb R}^n,t>0}
t^{\frac np-\frac nq}
\left(\int_{B(x,t)}|f(y)|^q\,{\rm d}y\right)^{\frac1q},
\]
where the symbol $B(x,t)$ is defined by the ball with radius $t>0$ and center $x\in{\mathbb R}^n$.
\end{itemize}
\end{definition}

The following is the main result.

\begin{theorem}\label{thm:Mpq-wLs}
Let $1<q\le p<\infty$, $1<s<\infty$ and $\gamma>0$, and assume that
\[
\frac nq-\frac np<\gamma<n-\frac np,
\]
Then the following assertions hold{\rm :}
\begin{itemize}
\item[{\rm (1)}] If $q\le s$, then there exists $C>0$ such that for all $t>0$ and $f\in{\mathcal M}^p_q({\mathbb R}^n)$,
\[
\|e^{t\Delta}[|\cdot|^{-\gamma}f]\|_{L^{s,\infty}}
\le C
t^{-\frac n2\left(\frac1p-\frac1s\right)-\frac\gamma2}
\|f\|_{{\mathcal M}^p_q}.
\]

\item[{\rm (2)}] If $q<s$, then there exists $C>0$ such that for all $t>0$ and $f\in{\mathcal M}^p_q({\mathbb R}^n)$,
\[
\|e^{t\Delta}[|\cdot|^{-\gamma}f]\|_{L^s}
\le C
t^{-\frac n2\left(\frac1p-\frac1s\right)-\frac\gamma2}
\|f\|_{{\mathcal M}^p_q}.
\]
\end{itemize}
\end{theorem}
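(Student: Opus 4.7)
The plan is to reduce to $t = 1$ by scaling, then combine a pointwise $L^\infty$ bound with a weak-type endpoint estimate at the critical exponent $s_0 := n/(\gamma + n/p)$, and finally conclude by real interpolation. Under the parabolic scaling $f \mapsto f(\sqrt{t}\,\cdot)$ the Morrey norm rescales by $t^{-n/(2p)}$, and a direct change of variables shows that both inequalities follow from the case $t = 1$. Setting $F(x) := e^{\Delta}[|\cdot|^{-\gamma} f](x)$, I dyadically bound the Gaussian kernel by $e^{-|x-y|^2/4} \le C \sum_{k \ge 0} e^{-2^{2k-4}} \chi_{B(x, 2^k)}(y)$ and on each $B(x, 2^k)$ apply H\"older's inequality with exponents $q$ and $q'$ together with the Morrey estimate $\|f\|_{L^q(B(x, 2^k))} \le C\, 2^{k(n/q - n/p)} \|f\|_{\mathcal{M}^p_q}$. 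The factor $\|\,|\cdot|^{-\gamma}\|_{L^{q'}(B(x, 2^k))}$ is treated by splitting into the regimes $|x| \ge 2^{k+1}$ (so $|y| \sim |x|$ on the ball) and $|x| < 2^{k+1}$ (so $B(x, 2^k) \subset B(0, 3\cdot 2^k)$; here the integrability of $|y|^{-\gamma q'}$ near the origin follows from the upper hypothesis $\gamma < n - n/p \le n/q'$). Summing in $k$ against the super-exponential Gaussian weights yields the pointwise bound $|F(x)| \le C(1 + |x|)^{-\gamma} \|f\|_{\mathcal{M}^p_q}$; in particular $\|F\|_{L^\infty} \le C \|f\|_{\mathcal{M}^p_q}$.

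For the endpoint weak-type estimate at $s_0$ (which satisfies $1 < s_0 \le q$ by the hypotheses on $\gamma$), I rely on the characterization $\|F\|_{L^{s_0, \infty}} \sim \sup_{0 < |E| < \infty} |E|^{1/s_0 - 1} \int_E |F|$, so the task reduces to showing $\int_E |F| \le C |E|^{1 - 1/s_0} \|f\|_{\mathcal{M}^p_q}$ for every measurable set $E$ of finite measure. By Fubini, $\int_E |F| \le C \int |y|^{-\gamma} |f(y)| H_E(y) \,{\rm d}y$ with $H_E(y) := \int_E e^{-|x-y|^2/4} \,{\rm d}x$. I then decompose $\mathbb{R}^n$ into dyadic annuli $A_j := \{y : 2^j \le |y| < 2^{j+1}\}$ ($j \in \mathbb{Z}$); on each $A_j$, H\"older with exponents $q, q'$ and the Morrey bound handle $f$, while $H_E$ is controlled via interpolation between $\|H_E\|_{L^\infty} \le C$ and $\int_{A_j} H_E \le C \min(|E|, |A_j|)$ (each a consequence of $\|e^{-|\cdot|^2/4}\|_{L^1} < \infty$ and $|A_j| \sim 2^{jn}$), yielding $\|H_E\|_{L^{q'}(A_j)} \le C \min(|E|, 2^{jn})^{1/q'}$. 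The $j$-th term of the sum becomes $C\, 2^{j(n/q - n/p - \gamma)} \min(|E|, 2^{jn})^{1/q'} \|f\|_{\mathcal{M}^p_q}$. Splitting the sum at $j^*$ with $2^{j^* n} \sim |E|$, both sub-sums are geometric series with controlling exponents $n - n/p - \gamma > 0$ (from the upper bound on $\gamma$) for $j \le j^*$ and $n/q - n/p - \gamma < 0$ (from the lower bound on $\gamma$) for $j > j^*$; each contributes exactly $|E|^{1 - 1/p - \gamma/n} = |E|^{1 - 1/s_0}$, closing the estimate.

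A standard Marcinkiewicz-type argument---cutting off $\|F\|_{L^s}^s = s \int_0^\infty \lambda^{s-1} |\{|F| > \lambda\}| \,{\rm d}\lambda$ at the $L^\infty$-bound, below which one uses the weak $L^{s_0, \infty}$-estimate---then delivers the strong $L^s$ bound for every $s \in (s_0, \infty)$. This range covers $(q, \infty)$, establishing assertion (2), and via the embedding $L^s \hookrightarrow L^{s, \infty}$ it also delivers the weak range $[q, \infty)$, establishing assertion (1) (the endpoint $s = q > s_0$ being handled by the strong $L^q$ bound itself). The main difficulty I anticipate lies in the endpoint estimate of the middle paragraph: both hypotheses $n/q - n/p < \gamma < n - n/p$ are used decisively, each controlling one tail of the dyadic sum over $j$, and their simultaneous sharpness is precisely what produces the exponent $1 - 1/s_0$ matching the scaling in the theorem.
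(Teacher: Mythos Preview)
Your overall strategy is sound, but there is a concrete error in the $L^\infty$ step. You assert that $|y|^{-\gamma q'}$ is locally integrable because ``$\gamma < n - n/p \le n/q'$''. In fact, since $q \le p$ one has $n/q' = n - n/q \le n - n/p$, so the chain of inequalities is reversed; for any $\gamma$ in the admissible subrange $[\,n - n/q,\; n - n/p\,)$ (nonempty whenever $q < p$) the function $|y|^{-\gamma q'}$ is \emph{not} integrable at the origin and your H\"older split on $B(x,2^k)$ genuinely fails. The repair is immediate and is exactly the device you already use in your weak-type paragraph: when $|x| < 2^{k+1}$, further decompose $B(0,3\cdot 2^k)$ into annuli $\{2^j \le |y| < 2^{j+1}\}$ with $j \le k + C$, bound $|y|^{-\gamma}$ by $2^{-j\gamma}$ on each, and apply H\"older with exponents $q,q'$ only on the annulus. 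The $j$-th contribution is at most $C\, 2^{j(n - n/p - \gamma)}\|f\|_{\mathcal{M}^p_q}$, and since $n - n/p - \gamma > 0$ the geometric sum is dominated by the top scale $j \sim k$, restoring the bound you need.

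With that fix your argument is correct and follows a route quite different from the paper's. The paper proceeds through the \emph{local Morrey-type} scale ${\rm L}\mathcal{M}^p_{q,r}$: for the special value $\gamma = n/q - n/p$ the desired estimate is literally the classical $L^q$--$L^s$ heat smoothing (because $\|\,|\cdot|^{-\gamma}f\|_{L^q}$ coincides with $\|f\|_{{\rm L}\mathcal{M}^p_{q,q}}$), and general $\gamma$ is recovered by real interpolation of ${\rm L}\mathcal{M}^{p_0}_{q,q}$ and ${\rm L}\mathcal{M}^{p_1}_{q,1}$ in the first index via the Burenkov--Nursultanov theorem, after which one passes to the global Morrey norm through $\|f\|_{\mathcal{M}^p_q} = \sup_x \|f(x-\cdot)\|_{{\rm L}\mathcal{M}^p_{q,\infty}}$. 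Your approach bypasses this machinery entirely: you work directly with the heat kernel, prove a scale-invariant weak-$L^{s_0}$ endpoint via the Kolmogorov-type duality $\|F\|_{L^{s_0,\infty}} \sim \sup_E |E|^{1/s_0 - 1}\int_E |F|$, and finish by Marcinkiewicz between $s_0$ and $\infty$. This is more self-contained and in fact yields the strong $L^s$ bound on the full interval $s \in (s_0,\infty)$ with $s_0 = n/(\gamma + n/p) < q$, slightly more than the paper states; the paper's framework, on the other hand, extends without extra effort to the Morrey-type/Lorentz refinement $\mathcal{M}^p_{q,r} \to L^{s,r}$ recorded in its Section~4.
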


We organize the remaining part of the paper as follows:
To prove the main theorem, we introduce the local Morrey-type spaces and its basic properties in Section \ref{s:pre}.
In Section \ref{s:Proof}, we prove the main theorem.
Finally, we give the remarks that the main theorem is improvement of the equation \eqref{eq:wLp-wLs}, and provide the extension of the main theorem in Section \ref{s:remark}.

\section{Preliminaries}\label{s:pre}

To prove the main result, we will use the local Morrey-type spaces.
Using these function spaces, we can return to the estimates for the Lebesgue spaces with a power weight.
After that we can use real interpolation, and be reduced to the estimate for the Morrey spaces.

\begin{definition}
Let $1\le q\le p<\infty$ and $1\le r\le\infty$.
The local Morrey-type space ${\rm L}{\mathcal M}^p_{q,r}({\mathbb R}^n)$  is defined as the space of all measurable functions $f$ with the finite norm
\[
\|f\|_{{\rm L}{\mathcal M}^p_{q,r}}
\equiv
\begin{cases}
\displaystyle
\left(
\int_0^\infty
\left[
t^{\frac np-\frac nq}
\left(
\int_{B(t)}|f(x)|^q\,{\rm d}x
\right)^{\frac1q}
\right]^r
\,\frac{{\rm d}t}t
\right)^{\frac1r},
& r<\infty, \\
\displaystyle
\sup_{t>0}
t^{\frac np-\frac nq}
\left(
\int_{B(t)}|f(x)|^q\,{\rm d}x
\right)^{\frac1q},
& r=\infty.
\end{cases}
\]
\end{definition}

By definition, we can see that
\begin{equation}\label{eq:M-LM}
\|f\|_{{\mathcal M}^p_q}
=
\sup_{x\in{\mathbb R}^n}
\|f(x-\cdot)\|_{{\rm L}{\mathcal M}^p_{q,\infty}}.
\end{equation}
Especially, the local Morrey-type space ${\rm L}{\mathcal M}^p_{q,\infty}({\mathbb R}^n)$ stands for the local Morrey space ${\rm L}{\mathcal M}^p_q({\mathbb R}^n)$.
Remark that when $s<\infty$, the local Morrey-type space ${\rm L}{\mathcal M}^p_{q,r}({\mathbb R}^n)$ must be nontrivial to satisfy $p<q$ (see \cite[Proposition 177]{SDH20}).
Then, here and below, we assume that $p<q$ when we use the local Morrey-type space ${\rm L}{\mathcal M}^p_{q,r}({\mathbb R}^n)$.

The embeddings for the local Morrey-type spaces are well known as follows.

\begin{lemma}[{\cite[Proposition 179 and Exercise 49]{SDH20}}]\label{prop:embedding-LM}
For the parameters $1\le q,q_1,q_2<p<\infty$ and $1\le r,r_1,r_2\le\infty$, the following assertions are hold{\rm :}
\begin{itemize}
\item[{\rm (1)}] If $q_1\ge q_2$, then
$
{\rm L}{\mathcal M}^p_{q_1,r}({\mathbb R}^n)
\hookrightarrow
{\rm L}{\mathcal M}^p_{q_2,r}({\mathbb R}^n)
$.

\item[{\rm (2)}] If $r_1\le r_2$, then
$
{\rm L}{\mathcal M}^p_{q,r_1}({\mathbb R}^n)
\hookrightarrow
{\rm L}{\mathcal M}^p_{q,r_2}({\mathbb R}^n)
$.
\end{itemize}
\end{lemma}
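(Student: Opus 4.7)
Both embeddings reduce to statements about the one-parameter family
\[
\varphi_q(t) := t^{\frac{n}{p} - \frac{n}{q}}\left(\int_{B(t)} |f(x)|^q\, dx\right)^{\frac{1}{q}}, \qquad t>0,
\]
via either the sup (for $r=\infty$) or the $L^r\bigl((0,\infty), dt/t\bigr)$ norm. Parts (1) and (2) handle comparison across the inner exponent $q$ and the outer exponent $r$, respectively; in each case I aim to prove a pointwise inequality for $\varphi$ and then lift it to the norm.

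For Part (1), since $B(t)$ has finite measure $c_n t^n$ and $q_2\le q_1$, Hölder's inequality on $B(t)$ yields $\|f\|_{L^{q_2}(B(t))} \le |B(t)|^{1/q_2 - 1/q_1}\,\|f\|_{L^{q_1}(B(t))}$. Multiplying both sides by $t^{n/p - n/q_2}$ and collecting the volume factor, which contributes exactly $t^{n/q_2 - n/q_1}$, produces $\varphi_{q_2}(t) \le C\,\varphi_{q_1}(t)$ pointwise with $C = C(n, q_1, q_2)$. Taking either $L^r(dt/t)$ or the sup on both sides finishes this case.

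For Part (2), I would first prove the endpoint embedding ${\rm L}{\mathcal M}^p_{q,r_1}({\mathbb R}^n) \hookrightarrow {\rm L}{\mathcal M}^p_{q,\infty}({\mathbb R}^n)$ and then bootstrap. Since the family of balls $B(t)$ is increasing in $t$, the map $t \mapsto \bigl(\int_{B(t)}|f|^q\bigr)^{1/q}$ is nondecreasing, while $t^{n/p - n/q}$ is strictly decreasing under the standing nontriviality assumption $q<p$. Comparing values over the dyadic interval $[t_0, 2t_0]$ yields the lower bound $\varphi_q(t) \ge 2^{n/p - n/q}\,\varphi_q(t_0)$ for every $t \in [t_0, 2t_0]$, whence
\[
(\log 2)\,\varphi_q(t_0)^{r_1} \le 2^{r_1(n/q - n/p)}\int_{t_0}^{2t_0} \varphi_q(t)^{r_1}\,\frac{dt}{t} \le C\,\|f\|_{{\rm L}{\mathcal M}^p_{q,r_1}}^{r_1}.
\]
Taking $\sup_{t_0>0}$ gives the endpoint embedding. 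For the remaining range $r_1 < r_2 < \infty$, I split $\varphi_q^{r_2} = \varphi_q^{r_2 - r_1}\,\varphi_q^{r_1}$, bound the first factor pointwise by the sup norm just obtained, and integrate to conclude.

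The only step that demands care is the dyadic monotonicity device in Part (2). It depends crucially on $n/p - n/q < 0$, i.e.\ on the strict inequality $q < p$ that the paper has already imposed for nontriviality, and on the comparability $\varphi_q(t_0) \le C\,\varphi_q(t)$ across the dyadic scale $[t_0,2t_0]$. Once these are in place, the two-step structure --- endpoint embedding first, log-convexity-style splitting second --- is standard. Part (1) is pure Hölder bookkeeping and presents no real obstacle.
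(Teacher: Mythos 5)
Your proof is correct. The paper itself gives no proof of this lemma --- it is quoted from \cite[Proposition 179 and Exercise 49]{SDH20} --- and your two-step argument (H\"older's inequality on $B(t)$ with the volume factor $|B(t)|^{1/q_2-1/q_1}\sim t^{n/q_2-n/q_1}$ for part (1); the dyadic lower bound $\varphi_q(t)\ge 2^{n/p-n/q}\varphi_q(t_0)$ on $[t_0,2t_0]$ to get the $r=\infty$ endpoint, followed by the splitting $\varphi_q^{r_2}=\varphi_q^{r_2-r_1}\varphi_q^{r_1}$, for part (2)) is exactly the standard proof of these embeddings. The only minor remark is that the dyadic step does not actually require $q<p$: for any fixed exponent $a=n/p-n/q$ one has $t^{a}\ge\min(1,2^{a})\,t_0^{a}$ on $[t_0,2t_0]$, so the endpoint bound holds with constant $\max(1,2^{-a})$ regardless of sign; the hypothesis $q<p$ is needed only for the spaces to be nontrivial.
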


The special case of the norm of the local Morrey-type spaces can be written as the value at the origin of the Riesz potential given by Burenkov and Guliyev \cite[page 159]{BuGu04} as follows.
We gave the proof of the following lemma in Appendix \ref{App:LM-Ia} just in case.

\begin{lemma}\label{lem:LM-Ia}
Let $1<p<\infty$.
Then,
\[
\|f\|_{{\rm L}{\mathcal M}^p_{1,1}}
=
\left(n-\frac np\right)^{-1}
\int_{{\mathbb R}^n}\frac{|f(x)|}{|x|^{n-\frac np}}\,{\rm d}x.
\]
\end{lemma}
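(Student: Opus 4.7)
The proof is essentially an application of Fubini--Tonelli, since the integrand is nonnegative throughout. I would start by writing out the definition of the norm for the parameter choice $q=1$, $r=1$:
\[
\|f\|_{{\rm L}{\mathcal M}^p_{1,1}}
=
\int_0^\infty t^{\frac np - n}
\left(\int_{B(t)}|f(x)|\,{\rm d}x\right)\frac{{\rm d}t}{t}
=
\int_0^\infty \int_{{\mathbb R}^n}
t^{\frac np - n - 1}\chi_{\{|x|<t\}}(x,t)\,|f(x)|\,{\rm d}x\,{\rm d}t,
\]
where I have rewritten the domain constraint $x\in B(0,t)$ as $|x|<t$ via an indicator function in order to make the exchange of integrals transparent.

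Next I would apply Tonelli's theorem to swap the order of integration. The condition $|x|<t$ with $t>0$ fixed $x\in{\mathbb R}^n$ translates to $t>|x|$, so the inner integral becomes
\[
\int_{|x|}^\infty t^{\frac np - n - 1}\,{\rm d}t.
\]
Since the assumption $1<p<\infty$ forces $\frac{n}{p}-n<0$, this one-variable integral converges and an elementary antiderivative calculation gives it the value $\left(n-\frac{n}{p}\right)^{-1}|x|^{\frac np - n}$. Substituting this back yields the claimed identity.

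The only non-routine point to flag is the justification for Tonelli: this requires that the integrand is measurable and nonnegative, which is clear, so the identity holds as an equality in $[0,\infty]$ without any finiteness hypothesis on $f$. There is no genuine obstacle here; the lemma is really a restatement of a one-dimensional integration by exchanging order. I would therefore keep the write-up short, noting only the use of Tonelli and the explicit evaluation of the radial integral.
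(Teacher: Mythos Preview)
Your proof is correct and essentially identical to the paper's own argument: both rest on Tonelli's theorem to exchange the $t$- and $x$-integrals, together with the elementary identity $\int_{|x|}^\infty t^{n/p-n-1}\,{\rm d}t=(n-n/p)^{-1}|x|^{n/p-n}$. The only cosmetic difference is that the paper starts from the weighted integral and arrives at the norm, whereas you proceed in the reverse direction.
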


This lemma represents that the local Morrey-type space ${\rm L}{\mathcal M}^p_{q,q}({\mathbb R}^n)$ is a Lebesgue space with power weight.
By this chacterization, it is easy to see the following.

\begin{lemma}\label{prop:LMpqq-Ls}
Let $1\le q<p<\infty$, $1\le s\le\infty$ and $\gamma>0$.
If
\[
q\le s \quad
\gamma=\frac nq-\frac np,
\]
then, there exists $C>0$ such that for all $t>0$ and $f\in{\rm L}{\mathcal M}^p_{q,q}({\mathbb R}^n)$,
\[
\|e^{t\Delta}[|\cdot|^{-\gamma}f]\|_{L^s}
\le C
t^{-\frac n2\left(\frac1p-\frac1s\right)-\frac\gamma2}
\|f\|_{{\rm L}{\mathcal M}^p_{q,q}}.
\]
\end{lemma}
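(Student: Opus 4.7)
The strategy is to identify the norm $\|\cdot\|_{{\rm L}{\mathcal M}^p_{q,q}}$ with a power-weighted $L^q$-norm via Lemma~\ref{lem:LM-Ia}, and then appeal to the classical unweighted $L^q$--$L^s$ smoothing estimate for the heat semigroup.

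The first step is the norm identification. Unraveling the definition directly gives
\[
\|f\|_{{\rm L}{\mathcal M}^p_{q,q}}^{q}
= \int_0^\infty t^{\frac{nq}{p}-n}\int_{B(t)}|f(x)|^q\,{\rm d}x\,\frac{{\rm d}t}{t}
= \bigl\||f|^q\bigr\|_{{\rm L}{\mathcal M}^{p/q}_{1,1}},
\]
and since $q<p$ yields $p/q>1$, Lemma~\ref{lem:LM-Ia} applied with parameter $p/q$ in place of $p$ and input $|f|^q$ in place of $f$ produces
\[
\|f\|_{{\rm L}{\mathcal M}^p_{q,q}}^{q}
= \left(n-\frac{nq}{p}\right)^{-1}\int_{{\mathbb R}^n}\frac{|f(x)|^q}{|x|^{n-nq/p}}\,{\rm d}x.
\]
The hypothesis $\gamma=n/q-n/p$ is equivalent to $\gamma q = n - nq/p$, so the right-hand side is a constant multiple of $\||\cdot|^{-\gamma}f\|_{L^q}^{q}$. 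Hence $\|f\|_{{\rm L}{\mathcal M}^p_{q,q}}$ and $\||\cdot|^{-\gamma}f\|_{L^q}$ are comparable.

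Once the norm identification is in hand, setting $g:=|\cdot|^{-\gamma}f$ reduces the claim to the classical heat smoothing bound
\[
\|e^{t\Delta}g\|_{L^s}
\le C\,t^{-\frac{n}{2}\left(\frac{1}{q}-\frac{1}{s}\right)}\|g\|_{L^q},
\qquad q\le s\le\infty,
\]
which follows from Young's convolution inequality applied to the Gaussian kernel (the bound $\|K_t\|_{L^{q'}}\lesssim t^{-\frac{n}{2}(1/q-1/s)+\cdots}$ handling in particular the endpoint $s=\infty$). The relation $\gamma=n/q-n/p$ then gives the exponent identity
\[
-\frac{n}{2}\left(\frac{1}{q}-\frac{1}{s}\right)
= -\frac{n}{2}\left(\frac{1}{p}-\frac{1}{s}\right)-\frac{\gamma}{2},
\]
which matches the temporal weight required by the statement. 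No substantial obstacle is expected: the argument is essentially bookkeeping once the norm identification is observed, and the only hypothesis that needs to be tracked carefully is $q<p$, which is exactly what licenses the use of Lemma~\ref{lem:LM-Ia} at the exponent $p/q>1$.
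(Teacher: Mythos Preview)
Your proposal is correct and follows essentially the same route as the paper: apply the unweighted $L^q$--$L^s$ heat smoothing estimate and use Lemma~\ref{lem:LM-Ia} to identify $\||\cdot|^{-\gamma}f\|_{L^q}$ with $\|f\|_{{\rm L}{\mathcal M}^p_{q,q}}$, then rewrite the time exponent via $\gamma=n/q-n/p$. Your explicit reduction $\|f\|_{{\rm L}{\mathcal M}^p_{q,q}}^q=\||f|^q\|_{{\rm L}{\mathcal M}^{p/q}_{1,1}}$ makes transparent why Lemma~\ref{lem:LM-Ia} (stated only for $q=1$) applies, a step the paper leaves implicit.
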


\begin{proof}
Using the $L^q$-$L^s$ smoothing estimate for the heat semigroup $\{e^{t\Delta}\}_{t>0}$ and Lemma \ref{lem:LM-Ia}, we have
\begin{align*}
\|e^{t\Delta}[|\cdot|^{-\gamma}f]\|_{L^s}
&\le C
t^{-\frac n2\left(\frac1q-\frac1s\right)}
\||\cdot|^{-\gamma}f\|_{L^q}\\
&= C
\left(n-\frac{nq}p\right)^{\frac1q}
t^{-\frac n2\left(\frac1p-\frac1s\right)-\frac\gamma2}
\|f\|_{{\rm L}{\mathcal M}^p_{q,q}},
\end{align*}
as desired.
\end{proof}

It is known that the real interpolation spaces for the local Morrey-type spaces can be calculated given by Burenkov and Nursultanov \cite{BuNu09}.
To obtain the main result, we may use the following statement to the previous lemma.

\begin{theorem}\label{thm:interpolation-LM}
Let $0<\theta<1$, $1\le q,p,p_0,p_1<\infty$ and $1\le r,r_0,r_1\le\infty$.
If
\[
\frac1p=\frac{1-\theta}{p_0}+\frac\theta{p_1},
\]
then the isomorphism
\[
(
{\rm L}{\mathcal M}^{p_0}_{q,r_0}({\mathbb R}^n),
{\rm L}{\mathcal M}^{p_1}_{q,r_1}({\mathbb R}^n)
)_{\theta,r}
\cong
{\rm L}{\mathcal M}^p_{q,r}({\mathbb R}^n)
\]
holds.
\end{theorem}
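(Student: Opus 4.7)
The plan is to reduce the interpolation identity to the standard real interpolation of weighted Lebesgue spaces on $(0,\infty)$. After a change of variable $u=t^n$ and setting $\psi_f(u):=\bigl(u^{-1}\int_{B(u^{1/n})}|f(x)|^q\,{\rm d}x\bigr)^{1/q}$, a direct calculation shows
\[
\|f\|_{{\rm L}{\mathcal M}^p_{q,r}}=n^{-1/r}\bigl\|u^{1/p}\psi_f(u)\bigr\|_{L^r((0,\infty),{\rm d}u/u)},
\]
so the Morrey-type norm is a weighted $L^r$ norm of the subadditive functional $\psi_f$ (subadditivity being immediate from the triangle inequality in $L^q$).

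Next I would estimate the $K$-functional for the couple $({\rm L}{\mathcal M}^{p_0}_{q,r_0},{\rm L}{\mathcal M}^{p_1}_{q,r_1})$ using the ball decomposition $f=f\chi_{B(\lambda)}+f\chi_{B(\lambda)^c}$ with $\lambda=\lambda(t)$ chosen appropriately. The resulting upper bound reduces to estimates involving $\phi_f(s):=(\int_{B(s)}|f|^q\,{\rm d}x)^{1/q}$; crucially, the tail integrals $\int_\lambda^\infty s^{r_i(n/p_i-n/q)}\,{\rm d}s/s$ converge because the standing assumption $p_i<q$ forces $n/p_i-n/q<0$. For the matching lower bound, for any decomposition $f=f_0+f_1$ I would use the pointwise inequality $\psi_f\leq\psi_{f_0}+\psi_{f_1}$ together with the lattice property of weighted $L^r$ to compare $K(t,f)$ with the $K$-functional of $\psi_f$ in the corresponding weighted $L^r$ couple.

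The last step is to invoke the classical weighted-$L^r$ version of the Lorentz interpolation identity,
\[
\bigl(L^{r_0}(u^{r_0/p_0}{\rm d}u/u),\,L^{r_1}(u^{r_1/p_1}{\rm d}u/u)\bigr)_{\theta,r}\cong L^r(u^{r/p}{\rm d}u/u),
\]
with $1/p=(1-\theta)/p_0+\theta/p_1$ giving the weight matching exactly. Composing with the identification from the first paragraph recovers the stated isomorphism, and a density/limiting argument handles the case $r=\infty$ in the endpoints.

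The main obstacle I anticipate is the lower bound mentioned above: the map $f\mapsto\psi_f$ is only sublinear rather than linear, so one cannot directly transport a decomposition of $\psi_f$ on $(0,\infty)$ back to an admissible decomposition of $f$ on $\mathbb{R}^n$. I plan to handle this by showing that the concrete ball decompositions are essentially optimal among all admissible splittings, via a Hardy-type argument that bounds integrals of $\phi_f(s)^{r_i}s^{r_i(n/p_i-n/q)}$ over $s>\lambda$ by the threshold value $\phi_f(\lambda)$, thereby matching the upper and lower K-functional estimates up to a universal constant.
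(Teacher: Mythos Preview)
The paper does not prove this theorem: it is stated as a known result and attributed to Burenkov and Nursultanov \cite{BuNu09}. So there is no in-paper proof to compare against; your proposal is an attempt to reproduce (a version of) the cited argument.

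Your outline is essentially the standard Burenkov--Nursultanov strategy: rewrite $\|f\|_{{\rm L}{\mathcal M}^p_{q,r}}$ as a weighted $L^r((0,\infty),{\rm d}u/u)$-norm of the monotone functional $\psi_f$, then reduce to the classical interpolation identity for power-weighted $L^r$ spaces. The identification in your first paragraph is correct, and the sublinearity argument you give for comparing $K(t,f)$ from below with $K(t,\psi_f)$ in the weighted $L^r$-couple is the right way to obtain the embedding $({\rm L}{\mathcal M}^{p_0}_{q,r_0},{\rm L}{\mathcal M}^{p_1}_{q,r_1})_{\theta,r}\hookrightarrow{\rm L}{\mathcal M}^p_{q,r}$.

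Two points to fix. First, the inequality is backwards: you write ``the standing assumption $p_i<q$ forces $n/p_i-n/q<0$'', but $p_i<q$ gives $n/p_i-n/q>0$ and your tail integral $\int_\lambda^\infty s^{r_i(n/p_i-n/q)}\,{\rm d}s/s$ then diverges. The correct hypothesis is $q<p_i$ (this is the paper's convention $q\le p$; the sentence in the paper reading ``$p<q$'' is itself a typo). Second, you misidentify where the real difficulty sits. The direction you call the ``lower bound'' (via $\psi_f\le\psi_{f_0}+\psi_{f_1}$ and the lattice property) is the easy one and does not require transporting decompositions of $\psi_f$ back to $f$. The nontrivial direction is the \emph{upper} bound on $K(t,f)$, and for that you already have the right tool: the concrete ball cut $f=f\chi_{B(\lambda)}+f\chi_{B(\lambda)^c}$ together with a Hardy-type inequality to absorb the tail term $\phi_f(\lambda)^{r_i}\!\int_\lambda^\infty s^{r_i(n/p_i-n/q)}\,{\rm d}s/s$ into $\int_0^\lambda[s^{n/p_i-n/q}\phi_f(s)]^{r_i}\,{\rm d}s/s$. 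Once you state that Hardy step precisely and choose $\lambda=\lambda(t)$ so that the two pieces balance, the argument closes; no ``inverse'' of $f\mapsto\psi_f$ is needed.
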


\section{Proof of Theorem \ref{thm:Mpq-wLs}}\label{s:Proof}

The case $p=q$ can be confirmed by the embedding $L^p({\mathbb R}^n)\hookrightarrow{\mathcal M}^p_q({\mathbb R}^n)$, immediately.
Thus we may assume that $p<q$.

\begin{itemize}
\item[(1)] Take $1<p_0,p_1<\infty$ and $1<s_0,s_1<\infty$ satisfying
\[
\frac nq-\frac n{p_0}
=
n-\frac n{p_1}
=
\gamma,
\quad
s_0>s>s_1,
\]
arbitrarily.
Then it is easy to see that $p_0,p_1$ satisfy $p_0>p>p_1$.
Using Lemma \ref{prop:LMpqq-Ls}, we have
\[
\|e^{t\Delta}[|\cdot|^{-\gamma}f]\|_{L^{s_0}}
\le C
t^{-\frac n2\left(\frac1{p_0}-\frac1{s_0}\right)-\frac\gamma2}
\|f\|_{{\rm L}{\mathcal M}^{p_0}_{q,q}}
\]
and
\[
\|e^{t\Delta}[|\cdot|^{-\gamma}f]\|_{L^{s_1}}
\le C
t^{-\frac n2\left(\frac1{p_1}-\frac1{s_1}\right)-\frac\gamma2}
\|f\|_{{\rm L}{\mathcal M}^{p_1}_{1,1}}
\le C
t^{-\frac n2\left(\frac1{p_1}-\frac1{s_1}\right)-\frac\gamma2}
\|f\|_{{\rm L}{\mathcal M}^{p_1}_{q,1}},
\]
where in the last inequality we have used Lemma \ref{prop:embedding-LM} (1).

Here, we assume that
\[
\frac{\dfrac1p-\dfrac1{p_0}}{\dfrac1{p_1}-\dfrac1{p_0}}
=
\frac{\dfrac1s-\dfrac1{s_0}}{\dfrac1{s_1}-\dfrac1{s_0}}.
\]
If we choose $\theta\in(0,1)$ by this quantity, it is easy to see that
\[
\frac1p=\frac{1-\theta}{p_0}+\frac\theta{p_1},
\quad
\frac1s=\frac{1-\theta}{s_0}+\frac\theta{s_1}.
\]
Then the real interpolation gives
\[
\|e^{t\Delta}[|\cdot|^{-\gamma}f]\|_{(L^{s_0},L^{s_1})_{\theta,\infty}}
\le C
t^{-\frac n2\left(\frac1p-\frac1s\right)-\frac\gamma2}
\|f\|_{
(
{\rm L}{\mathcal M}^{p_0}_{q,q},
{\rm L}{\mathcal M}^{p_1}_{q,1}
)_{\theta,\infty}
}.
\]
By Theorem \ref{thm:interpolation-LM}, we obtain
\[
\|e^{t\Delta}[|\cdot|^{-\gamma}f]\|_{L^{s,\infty}}
\le C
t^{-\frac n2\left(\frac1p-\frac1s\right)-\frac\gamma2}
\|f\|_{{\rm L}{\mathcal M}^p_{q,\infty}}
\le C
t^{-\frac n2\left(\frac1p-\frac1s\right)-\frac\gamma2}
\|f\|_{{\mathcal M}^p_q},
\]
where in the last inequality we used the equation \eqref{eq:M-LM}.
This is the desired result.

\item[(2)] Taking $s_0,s_1$ by $s_0>s>s_1\ge q$ instead of $s$ in (1), we have
\[
\|e^{t\Delta}[|\cdot|^{-\gamma}f]\|_{L^{s_0,\infty}}
\le C
t^{-\frac n2\left(\frac1p-\frac1{s_0}\right)-\frac\gamma2}
\|f\|_{{\mathcal M}^p_q}
\]
and
\[
\|e^{t\Delta}[|\cdot|^{-\gamma}f]\|_{L^{s_1,\infty}}
\le C
t^{-\frac n2\left(\frac1p-\frac1{s_1}\right)-\frac\gamma2}
\|f\|_{{\mathcal M}^p_q}.
\]
Hence, real interpolation gives
\[
\|e^{t\Delta}[|\cdot|^{-\gamma}f]\|_{L^s}
\le C
t^{-\frac n2\left(\frac1p-\frac1s\right)-\frac\gamma2}
\|f\|_{{\mathcal M}^p_q},
\]
as desired.
\end{itemize}

\section{Remarks}\label{s:remark}

It is known that $p>q$ implies the embedding
\[
L^{p,\infty}({\mathbb R}^n)
\hookrightarrow
{\mathcal M}^p_q({\mathbb R}^n)
\]
holds (see \cite[page 136]{Tsutsui10}).
Then we can see the estimate \eqref{eq:wLp-wLs} from Theorem \ref{thm:Mpq-wLs}.

Moreover, similar to the proof of Theorem \ref{thm:Mpq-wLs} (1), we have the extension of Theorem \ref{thm:Mpq-wLs} (1) by using (global) Morrey-type spaces.

\begin{definition}
Let $1\le p<\infty$ and $1\le q\le\infty$.
The Lorentz spaces $L^{p,q}({\mathbb R}^n)$ is defined as the space of all measurable functions $f$ with the finite norm
\[
\|f\|_{L^{p,q}}
\equiv
\begin{cases}
\displaystyle
\left(
\int_0^\infty
\left[
t\|\chi_{\{x\in{\mathbb R}^n\,:\,|f(x)|>t\}}\|_{L^p}
\right]^q
\,\frac{{\rm d}t}t
\right)^{\frac1q},
& q<\infty, \\
\displaystyle
\sup_{t>0}
t\|\chi_{\{x\in{\mathbb R}^n\,:\,|f(x)|>t\}}\|_{L^p},
& q=\infty.
\end{cases}
\]
\end{definition}

\begin{definition}
Let $1\le q\le p<\infty$ and $1\le r\le\infty$.
The {\rm (global)} Morrey-type space ${\mathcal M}^p_{q,r}({\mathbb R}^n)$  is defined as the space of all measurable functions $f$ with the finite norm
\[
\|f\|_{{\mathcal M}^p_{q,r}}
\equiv
\begin{cases}
\displaystyle
\sup_{x\in{\mathbb R}^n}
\left(
\int_0^\infty
\left[
t^{\frac np-\frac nq}
\left(
\int_{B(x,t)}|f(y)|^q\,{\rm d}y
\right)^{\frac1q}
\right]^r
\,\frac{{\rm d}t}t
\right)^{\frac1r},
& r<\infty, \\
\displaystyle
\sup_{x\in{\mathbb R}^n,t>0}
t^{\frac np-\frac nq}
\left(
\int_{B(x,t)}|f(y)|^q\,{\rm d}y
\right)^{\frac1q},
& r=\infty.
\end{cases}
\]
\end{definition}

\begin{theorem}\label{thm:Mpqr-Lsr}
Let $1<q<p<\infty$, $1\le r\le\infty$, $1<s<\infty$ and $\gamma>0$.
If
\[
q\le s, \quad
\frac nq-\frac np<\gamma<n-\frac np,
\]
then, there exists $C>0$ such that for all $t>0$ and $f\in{\mathcal M}^p_{q,r}({\mathbb R}^n)$,
\[
\|e^{t\Delta}[|\cdot|^{-\gamma}f]\|_{L^{s,r}}
\le C
t^{-\frac n2\left(\frac1p-\frac1s\right)-\frac\gamma2}
\|f\|_{{\mathcal M}^p_{q,r}}.
\]
\end{theorem}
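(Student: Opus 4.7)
The plan is to mimic the proof of Theorem \ref{thm:Mpq-wLs}(1), replacing the final real-interpolation index $\infty$ by the general parameter $r$ throughout. Since Theorem \ref{thm:interpolation-LM} is already stated for every $1 \le r \le \infty$ and the two endpoint estimates coming from Lemma \ref{prop:LMpqq-Ls} land inside strong Lebesgue spaces rather than weak ones, the extension should come essentially for free once the target interpolation $(L^{s_0}, L^{s_1})_{\theta, r} = L^{s,r}$ is invoked.

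Concretely, I would first pick $p_0, p_1 \in (1, \infty)$ satisfying
\[
\gamma = \frac{n}{q} - \frac{n}{p_0} = n - \frac{n}{p_1},
\]
so that the hypothesis $\frac{n}{q} - \frac{n}{p} < \gamma < n - \frac{n}{p}$ forces $p_0 > p > p_1$, and then select $s_0, s_1$ with $q \le s_1 < s < s_0 < \infty$ along with $\theta \in (0,1)$ verifying
\[
\frac{1/p - 1/p_0}{1/p_1 - 1/p_0} = \theta = \frac{1/s - 1/s_0}{1/s_1 - 1/s_0},
\]
exactly as in the original proof. Applying Lemma \ref{prop:LMpqq-Ls} to the two triples $(p_0, q, s_0)$ and $(p_1, 1, s_1)$, together with the embedding ${\rm L}\mathcal{M}^{p_1}_{q,1} \hookrightarrow {\rm L}\mathcal{M}^{p_1}_{1,1}$ from Lemma \ref{prop:embedding-LM}(1) to upgrade the $q=1$ endpoint to a $q$-based norm, then yields the two endpoint inequalities into $L^{s_0}$ and $L^{s_1}$ with the correct powers of $t$.

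Taking the $(\theta, r)$-real interpolation of the two endpoint bounds, the standard identity $(L^{s_0}, L^{s_1})_{\theta, r} = L^{s, r}$ handles the target side, while Theorem \ref{thm:interpolation-LM} gives $({\rm L}\mathcal{M}^{p_0}_{q,q}, {\rm L}\mathcal{M}^{p_1}_{q,1})_{\theta, r} \cong {\rm L}\mathcal{M}^p_{q,r}$ on the source side. The final step is to discard the local restriction via the obvious inequality $\|f\|_{{\rm L}\mathcal{M}^p_{q,r}} \le \|f\|_{\mathcal{M}^p_{q,r}}$, obtained by specializing $x = 0$ in the supremum defining the global Morrey-type norm. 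I do not anticipate any substantive obstacle: the only non-cosmetic verification is that $p_0 > p > p_1$ under the stated hypothesis on $\gamma$ and that $\theta \in (0,1)$ can be chosen to fulfil both proportionality relations, both of which repeat the original proof verbatim.
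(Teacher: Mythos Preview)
Your proposal is correct and follows exactly the route the paper indicates (``similar to the proof of Theorem~\ref{thm:Mpq-wLs}(1)''): interpolate the two endpoint estimates from Lemma~\ref{prop:LMpqq-Ls} with the real-interpolation index $r$ in place of $\infty$, use $(L^{s_0},L^{s_1})_{\theta,r}=L^{s,r}$ on the target side and Theorem~\ref{thm:interpolation-LM} on the source side, then pass from ${\rm L}\mathcal{M}^p_{q,r}$ to $\mathcal{M}^p_{q,r}$ by taking $x=0$ in the supremum. One cosmetic fix: since your second endpoint applies Lemma~\ref{prop:LMpqq-Ls} with inner exponent $1$, the only constraint on $s_1$ is $s_1>1$, not $q\le s_1$ --- drop that extra restriction so the argument also covers the boundary case $s=q$.
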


Moreover, using the estimate
\[
\|\partial_t^k\partial_x^\alpha e^{t\Delta}f\|_{L^s}
\le C
t^{-\frac n2\left(\frac1q-\frac 1s\right)-k-\frac{|\alpha|}2}
\|f\|_{L^q}
\]
instead of the $L^q$-$L^s$ smoothing estimate for the heat semigroup $\{e^{t\Delta}\}_{t>0}$, we can generalize Theorems \ref{thm:Mpq-wLs} and \ref{thm:Mpqr-Lsr} as follows (see \cite[Subsection 1.1.3]{GGS}).

\begin{theorem}\label{thm:Mpqr-Lsr-2}
Let $k\in{\mathbb N}\cup\{0\}$, $\alpha\in({\mathbb N}\cup\{0\})^n$, $1<q\le p<\infty$, $1\le r\le\infty$, $1<s<\infty$ and $\gamma>0$, and assume that
\[
\frac nq-\frac np<\gamma<n-\frac np,
\]
Then the following assertions hold{\rm :}
\begin{itemize}
\item[{\rm (1)}] If $q\le s$, then there exists $C>0$ such that for all $t>0$ and $f\in{\mathcal M}^p_{q,r}({\mathbb R}^n)$,
\[
\|\partial_t^k\partial_x^\alpha e^{t\Delta}[|\cdot|^{-\gamma}f]\|_{L^{s,r}}
\le C
t^{
-\frac n2\left(\frac1p-\frac1s\right)
-k-\frac{|\alpha|}2-\frac\gamma2
}
\|f\|_{{\mathcal M}^p_{q,r}}.
\]

\item[{\rm (2)}] If $q<s$, then there exists $C>0$ such that for all $t>0$ and $f\in{\mathcal M}^p_q({\mathbb R}^n)$,
\[
\|\partial_t^k\partial_x^\alpha e^{t\Delta}[|\cdot|^{-\gamma}f]\|_{L^s}
\le C
t^{
-\frac n2\left(\frac1p-\frac1s\right)
-k-\frac{|\alpha|}2-\frac\gamma2
}
\|f\|_{{\mathcal M}^p_q}.
\]
\end{itemize}
\end{theorem}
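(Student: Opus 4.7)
The strategy is to repeat the proofs of Theorems \ref{thm:Mpq-wLs} and \ref{thm:Mpqr-Lsr} verbatim, replacing every invocation of the $L^q$--$L^s$ smoothing estimate for $\{e^{t\Delta}\}_{t>0}$ by its derivative-enhanced version quoted in the introduction. Since the latter differs from the plain smoothing estimate only by the extra time factor $t^{-k-|\alpha|/2}$ and leaves all Lebesgue exponents untouched, every interpolation step and every parameter choice carries over without modification.

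First I would upgrade Lemma \ref{prop:LMpqq-Ls} by replacing $e^{t\Delta}$ there with $\partial_t^k\partial_x^\alpha e^{t\Delta}$: under the same hypotheses $1\le q<p<\infty$, $q\le s\le\infty$ and $\gamma=n/q-n/p$, the derivative smoothing estimate applied to $|\cdot|^{-\gamma}f$ together with Lemma \ref{lem:LM-Ia} yields
\[
\|\partial_t^k\partial_x^\alpha e^{t\Delta}[|\cdot|^{-\gamma}f]\|_{L^s}
\le C\,t^{-\frac n2\left(\frac1p-\frac1s\right)-k-\frac{|\alpha|}2-\frac\gamma2}\|f\|_{{\rm L}{\mathcal M}^p_{q,q}}.
\]
With this derivative analog of Lemma \ref{prop:LMpqq-Ls} in hand, I would deduce part (1) exactly as in the proof of Theorem \ref{thm:Mpq-wLs} (1) and its extension Theorem \ref{thm:Mpqr-Lsr}: choose $p_0,p_1,s_0,s_1$ so that $n/q-n/p_0 = n-n/p_1 = \gamma$ and $s_0>s>s_1$; select $\theta\in(0,1)$ so that $1/p$ and $1/s$ are the corresponding convex combinations; apply the derivative analog at the two endpoints $(p_0,s_0)$ and $(p_1,s_1)$ (the second after using Lemma \ref{prop:embedding-LM} (1) to pass from ${\rm L}{\mathcal M}^{p_1}_{1,1}$ to ${\rm L}{\mathcal M}^{p_1}_{q,1}$); then real-interpolate in both target and source using Theorem \ref{thm:interpolation-LM}, combined with \eqref{eq:M-LM} and the global variant of the interpolation identity from \cite{BuNu09}, to reach ${\mathcal M}^p_{q,r}$ on the source side and $L^{s,r}$ on the target side.

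Part (2) would then follow by interpolating between two weak-type estimates supplied by part (1), exactly as in the proof of Theorem \ref{thm:Mpq-wLs} (2): choose $s_0>s>s_1\ge q$, apply part (1) with targets $L^{s_0,\infty}$ and $L^{s_1,\infty}$ (and source ${\mathcal M}^p_q$), and real-interpolate in the target alone to upgrade the weak $L^s$-bound to the strong one.

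The main obstacle is essentially confined to bookkeeping. The derivative $\partial_t^k\partial_x^\alpha$ commutes with the convolution defining the heat semigroup and only modifies the time-weight; the admissible ranges for the auxiliary parameters $p_0,p_1,s_0,s_1$ are the same as in Theorems \ref{thm:Mpq-wLs} and \ref{thm:Mpqr-Lsr}, so no new constraints on $p,q,s,\gamma$ arise. The only point requiring mild care is to check that the global Morrey-type interpolation used for ${\mathcal M}^p_{q,r}$ in part (1) with general $r$ is exactly the one already employed in the step from Theorem \ref{thm:Mpq-wLs} to Theorem \ref{thm:Mpqr-Lsr}, which it is.
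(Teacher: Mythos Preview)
Your proposal is correct and matches the paper's own argument essentially verbatim: the paper states Theorem \ref{thm:Mpqr-Lsr-2} precisely by remarking that one should substitute the derivative estimate $\|\partial_t^k\partial_x^\alpha e^{t\Delta}f\|_{L^s}\le C\,t^{-\frac n2(\frac1q-\frac1s)-k-\frac{|\alpha|}2}\|f\|_{L^q}$ for the plain $L^q$--$L^s$ smoothing estimate in the proofs of Theorems \ref{thm:Mpq-wLs} and \ref{thm:Mpqr-Lsr}, which is exactly what you do. The only minor remark is that you do not need a separate ``global variant'' of the Burenkov--Nursultanov interpolation identity; as in the paper's proof of Theorem \ref{thm:Mpq-wLs}(1), one interpolates on the \emph{local} Morrey-type scale via Theorem \ref{thm:interpolation-LM} to land in ${\rm L}{\mathcal M}^p_{q,r}$ and then uses the trivial inequality $\|f\|_{{\rm L}{\mathcal M}^p_{q,r}}\le\|f\|_{{\mathcal M}^p_{q,r}}$ coming from the definition (the $r$-analog of \eqref{eq:M-LM}).
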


This theorem is improvement of the estimate
\begin{equation}\label{eq:Lpr-Lsr}
\|
\partial_t^k\partial_x^\alpha e^{t\Delta}[|\cdot|^{-\gamma}f]
\|_{L^{s,r}}
\le C
t^{
-\frac n2\left(\frac1p-\frac1s\right)
-k-\frac{|\alpha|}2-\frac\gamma2
}
\|f\|_{L^{p,r}}
\end{equation}
given by the real interpolation of the estimate \eqref{eq:Lp-Ls}.
In fact, this claim is seen from the following theorem.

\begin{theorem}\label{thm:L-M}
Let $1\le q<p<\infty$ and $1\le r\le\infty$.
Then the embedding
\[
L^{p,r}({\mathbb R}^n)
\hookrightarrow
{\mathcal M}^p_{q,r}({\mathbb R}^n)
\]
holds, and this embedding is proper.
\end{theorem}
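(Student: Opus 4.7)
The plan is to address the continuous inclusion and the strictness separately. For the inclusion, the crucial observation is that the inner integral $\int_{B(x,t)}|f|^q$ admits a bound that is independent of the centre $x$: since $(f\chi_{B(x,t)})^*\le f^*$ and $(f\chi_{B(x,t)})^*$ is supported on $[0,|B(x,t)|]$, one has
\[
\int_{B(x,t)}|f(y)|^q\,{\rm d}y
\le\int_0^{c_n t^n}f^*(s)^q\,{\rm d}s.
\]
Inserting this into the definition of the ${\mathcal M}^p_{q,r}$-norm and making the change of variables $V=c_nt^n$ trivialises the supremum in $x$ and reduces the inclusion to the one-dimensional inequality
\[
\int_0^\infty V^{r/p-r/q}\left(\int_0^V f^*(s)^q\,{\rm d}s\right)^{r/q}\frac{{\rm d}V}{V}
\le C\,\|f\|_{L^{p,r}}^r
\]
and its obvious supremum analogue for $r=\infty$.

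To prove this reduced inequality I would invoke the weighted Hardy inequality for non-increasing functions with $u=(f^*)^q$, exponent $R=r/q$, and weight parameter $c=r/q-r/p$; the hypothesis $q<p$ gives $c>0$, so the classical estimate
\[
\int_0^\infty V^{-c-1}\Bigl(\int_0^V u(s)\,{\rm d}s\Bigr)^{R}\,{\rm d}V
\le C(R,c)\int_0^\infty V^{R-c-1}u(V)^R\,{\rm d}V
\]
supplies exactly the required bound. The case $r=\infty$ is easier, following from $f^*(s)\le s^{-1/p}\|f\|_{L^{p,\infty}}$ via a direct integration. The main technical subtlety is the regime $r<q$, where $R<1$ and the Hardy inequality fails for general non-negative functions; here the monotonicity of $f^*$ is genuinely necessary, and I would appeal to the Ari\~{n}o--Muckenhoupt--Sawyer theory of Hardy inequalities for non-increasing functions to close the argument.

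For the strictness, I would exhibit $f=\chi_E$ with $E=\bigcup_{k\ge 1}B(2^ke_1,1)$. Since $|E|=\infty$ one has $f^*\equiv 1$ on $(0,\infty)$, and hence $\|f\|_{L^{p,r}}=\infty$. On the other hand, the lacunary spacing of the centres $2^ke_1$ ensures that at most $O(\log t)$ of the unit balls $B(2^ke_1,1)$ can meet any given $B(x,t)$, yielding the uniform bound $|E\cap B(x,t)|\lesssim\min(c_nt^n,\log t)$. Combined with the sign condition $n/p-n/q<0$, this makes
\[
\int_0^\infty\Bigl[t^{n/p-n/q}|E\cap B(x,t)|^{1/q}\Bigr]^r\,\frac{{\rm d}t}{t}
\]
finite uniformly in $x$, so that $f\in{\mathcal M}^p_{q,r}({\mathbb R}^n)\setminus L^{p,r}({\mathbb R}^n)$ and the embedding is proper.
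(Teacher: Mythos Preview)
Your proof is correct, and it differs genuinely from the paper's argument. For the embedding, the paper does not work with rearrangements at all: it reduces to the \emph{local} Morrey-type scale via translations, proves the single endpoint $L^{p,1}\hookrightarrow{\rm L}{\mathcal M}^p_{1,1}$ using the identity $\|f\|_{{\rm L}{\mathcal M}^p_{1,1}}=C\int|f(x)|\,|x|^{n/p-n}\,{\rm d}x$ (Lemma~\ref{lem:LM-Ia}) together with H\"older's inequality in Lorentz spaces, and then obtains the general $r$ by real interpolation of local Morrey-type spaces (Burenkov--Nursultanov, Theorem~\ref{thm:interpolation-LM}). Your route instead kills the supremum over centres with the rearrangement inequality $\int_{B(x,t)}|f|^q\le\int_0^{c_nt^n}(f^*)^q$ and reduces everything to a one-variable weighted Hardy estimate for the non-increasing function $(f^*)^q$. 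The trade-off is clear: the paper's argument is economical in context because it recycles exactly the interpolation machinery already built for Theorem~\ref{thm:Mpq-wLs} and needs no case distinction in $r$, whereas your argument is more portable (no Burenkov--Nursultanov) and entirely elementary when $r\ge q$, at the cost of invoking the Ari\~no--Muckenhoupt theory in the regime $r<q$, which is a genuine extra ingredient. Your observation that the $r<q$ case really requires monotonicity is accurate; the power weight $V^{r/p-1}$ does satisfy the $B_{r/q}$ condition, so the appeal is legitimate.

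For the strictness, your example is essentially the paper's: the paper uses the indicator of $B(1)\cup\bigcup_{j\ge1}(B(1)\pm 10^j{\bf e}_1)$ and checks membership in ${\mathcal M}^p_{q,1}$ (Lemma~\ref{lem:L-M}), then concludes for all $r$ via the embedding ${\mathcal M}^p_{q,1}\hookrightarrow{\mathcal M}^p_{q,r}$. Your lacunary centres $2^k{\bf e}_1$ and the bound $|E\cap B(x,t)|\lesssim\min(t^n,1+\log_+t)$ lead to the same computation.
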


To prove this theorem, we use the following lemma.

\begin{lemma}\label{lem:L-M}
Let $1\le q<p<\infty$, and set
\[
g(x)\equiv
\chi_{B(1)}(x)+
\sum_{j=1}^\infty
(
\chi_{B(1)}(x-10^j{\bf e}_1)
+
\chi_{B(1)}(x+10^j{\bf e}_1)
),
\]
where ${\bf e}_1\equiv(1,0,\ldots,0)$.
Then
$
g\in{\mathcal M}^p_{q,1}({\mathbb R}^n)
\setminus
L^{p,\infty}({\mathbb R}^n)
$.
\end{lemma}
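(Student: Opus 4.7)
The plan is to treat the two claims separately and to exploit throughout that $g$ is the indicator of a disjoint union of unit balls. The centers of the unit balls appearing in the definition of $g$ are $0$ and $\pm 10^{j}\mathbf{e}_1$ for $j\geq 1$; any two of them are separated by at least $10$, so the balls $B(c,1)$ are pairwise disjoint, and writing $E\equiv\{g\neq 0\}=\bigsqcup_{c}B(c,1)$ one has $g=\chi_E$ and $|E|=\infty$. The failure $g\notin L^{p,\infty}(\mathbb{R}^n)$ is then immediate: for every $\lambda\in(0,1)$ we have $\{|g|>\lambda\}=E$, so $\|\chi_{\{|g|>\lambda\}}\|_{L^p}=|E|^{1/p}=\infty$, and hence $\|g\|_{L^{p,\infty}}=\infty$.

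For the membership $g\in\mathcal{M}^p_{q,1}(\mathbb{R}^n)$ the key step is a uniform pointwise bound on
\[
\Phi(x,t)\equiv\int_{B(x,t)}|g(y)|^{q}\,{\rm d}y=|E\cap B(x,t)|.
\]
I claim $\Phi(x,t)\leq C t^{n}$ for $0<t\leq 1$ and $\Phi(x,t)\leq C(1+\log t)$ for $t\geq 1$, uniformly in $x$. Only centers $c\in B(x,t+1)$ contribute to $\Phi(x,t)$, and each such $B(c,1)\cap B(x,t)$ has measure $\leq C\min(t^{n},1)$, so it suffices to bound $N(x,t+1)$, the number of centers in $B(x,t+1)$. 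For $t\leq 1$ the mutual distance $\geq 10$ of the centers forces $N(x,t+1)\leq 1$. For $t\geq 1$, projecting onto the first coordinate reduces the count to an elementary one-dimensional statement: any interval of length $\leq 2(t+1)$ contains at most $C(1+\log t)$ points of $\{0\}\cup\{\pm 10^{k}:k\geq 1\}$, since if $10^{j}$ and $10^{j+m}$ both lie in an interval of length $L$, then $10^{j}(10^{m}-1)\leq L$, and $10^{j}\geq 10$ forces $m\leq C(1+\log L)$; the two signs are handled identically, with at most one extra contribution from $0$.

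With this bound the Morrey-type norm is controlled by
\[
\|g\|_{\mathcal{M}^p_{q,1}}
\leq C\int_0^{1}t^{\frac{n}{p}-\frac{n}{q}-1}\cdot t^{n/q}\,{\rm d}t
+C\int_{1}^{\infty}t^{\frac{n}{p}-\frac{n}{q}-1}(1+\log t)^{1/q}\,{\rm d}t,
\]
and the first integral reduces to $\int_0^{1}t^{n/p-1}\,{\rm d}t<\infty$ while the second converges precisely because the hypothesis $q<p$ makes the polynomial exponent strictly less than $-1$, which absorbs the logarithmic factor. The main obstacle I expect is the logarithmic counting step: a crude density bound would give $N(x,t)\lesssim t$, which would fatally spoil convergence at $t\to\infty$, so the geometric sparsity of $\{10^{k}\}$ and the uniformity of the estimate in $x$ must both be handled with some care.
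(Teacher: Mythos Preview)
Your proof is correct and follows essentially the same route as the paper's: both derive $g\notin L^{p,\infty}$ from $|\{g\neq 0\}|=\infty$, and both control the Morrey-type norm by showing that a ball of radius $t$ can meet at most $O(1+\log t)$ of the unit balls comprising $E$, then integrating against $t^{n/p-n/q-1}$. The one difference is that the paper first asserts $\|g\|_{\mathcal{M}^p_{q,1}}=\|g\|_{{\rm L}\mathcal{M}^p_{q,1}}$ and thereafter counts only inside balls $B(0,t)$ centered at the origin (obtaining the bound $2j+1$ for $10^{j}-1<t\le 10^{j+1}-1$), whereas you carry a general center $x$ throughout and prove the logarithmic count uniformly in $x$; your treatment of this uniformity is in fact more explicit than the paper's.
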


\begin{proof}
Since $g(x)$ is a indicator function over the union of pairwise disjoint infinity balls, then $g\notin L^{p,\infty}({\mathbb R}^n)$.
On the other hand, note that for each $j\in{\mathbb N}$,
\[
\int_{B(t)}|g(x)|^q\,{\rm d}x
\le C
\cdot
\begin{cases}
\min(1,t)^n, & 0<t\le10-1, \\
2j+1, & 10^j-1<t\le10^{j+1}-1.
\end{cases}
\]
Thus,
\begin{align*}
\|g\|_{{\mathcal M}^p_{q,1}}
&=
\|g\|_{{\rm L}{\mathcal M}^p_{q,1}}
=
\int_0^\infty
t^{\frac np-\frac nq}
\left(\int_{B(t)}|g(x)|^q\,{\rm d}x\right)^{\frac1q}
\,\frac{{\rm d}t}t\\
&\le C
\left(
\int_0^{10-1}
t^{\frac np-\frac nq}\min(1,t)^{\frac nq}
\,\frac{{\rm d}t}t
+
\sum_{j=1}^\infty
\int_{10^j-1}^{10^{j+1}-1}
t^{\frac np-\frac nq}\cdot(2j+1)^{\frac1q}
\,\frac{{\rm d}t}t
\right)\\
&= C
\left(
1+
\sum_{j=1}^\infty(2j+1)^{\frac1q}
(10^j-1)^{\frac np-\frac nq}
\right)\\
&\le C
\left(
1+
\sum_{j=1}^\infty(2j+1)^{\frac1q}
(10^j)^{\frac np-\frac nq}
\right)
<\infty,
\end{align*}
as desired.
\end{proof}

We give a proof of Theorem \ref{thm:L-M}.

\begin{proof}[Proof of Theorem {\rm \ref{thm:L-M}}]
Since
\[
\|f(x-\cdot)\|_{L^{p,r}}
=
\|f\|_{L^{p,r}},
\quad
\sup_{x\in{\mathbb R}^n}
\|f(x-\cdot)\|_{{\rm L}{\mathcal M}^p_{q,r}}
=
\|f\|_{{\mathcal M}^p_{q,r}},
\]
we may prove
$
L^{p,r}({\mathbb R}^n)
\hookrightarrow
{\rm L}{\mathcal M}^p_{q,r}({\mathbb R}^n)
$.
Take $p>1$ arbitrarily.
By H\"older's inequality for the Lorentz spaces,
\begin{align*}
\int_{{\mathbb R}^n}\frac{|f(x)|}{|x|^{n-\frac np}}\,{\rm d}x
\le C
\left\|
\frac1{|\cdot|^{n-\frac np}}
\right\|_{L^{p',\infty}}
\|f\|_{L^{p,1}}
= C
\|f\|_{L^{p,1}},
\end{align*}
and then
$
L^{p,1}({\mathbb R}^n)
\hookrightarrow
{\rm L}{\mathcal M}^p_{1,1}({\mathbb R}^n)
$
by Lemma \ref{lem:LM-Ia}.
Let $1<p_0<p_1<\infty$ and $0<\theta<1$ satisfy
\[
\frac1p=\frac{1-\theta}{p_0}+\frac\theta{p_1}.
\]
Then, using Theorem \ref{thm:interpolation-LM}, we obtain
\begin{align*}
L^{p,r}({\mathbb R}^n)
&\cong
(
L^{p_0,1}({\mathbb R}^n),L^{p_1,1}({\mathbb R}^n)
)_{\theta,r}\\
&\hookrightarrow
(
{\rm L}{\mathcal M}^{p_0}_{1,1}({\mathbb R}^n),
{\rm L}{\mathcal M}^{p_1}_{1,1}({\mathbb R}^n)
)_{\theta,r}\\
&\cong
{\rm L}{\mathcal M}^p_{1,r}({\mathbb R}^n).
\end{align*}

Next, we show that the embedding $L^{p,r}({\mathbb R}^n)\hookrightarrow{\mathcal M}^p_{q,r}({\mathbb R}^n)$ is proper.
By the embedding
$
L^{p,r}({\mathbb R}^n)
\hookrightarrow
L^{p,\infty}({\mathbb R}^n)
$
and Lemma \ref{prop:embedding-LM} (2), it suffices to show that
$
{\mathcal M}^p_{q,1}({\mathbb R}^n)
\setminus
L^{p,\infty}({\mathbb R}^n)
\ne\emptyset
$.
This claim is clear from Lemma \ref{lem:L-M}.

This is the desired result.
\end{proof}

\appendix
\section{Proof of Lemma \ref{lem:LM-Ia}}
\label{App:LM-Ia}

In this appendix, we give the proof of Lemma \ref{lem:LM-Ia}.
We estimate
\begin{align*}
\int_{{\mathbb R}^n}\frac{|f(x)|}{|x|^{n-\frac np}}\,{\rm d}x
&=
\left(n-\frac np\right)
\int_{{\mathbb R}^n}
\int_{|x|}^\infty\frac{|f(x)|}{t^{n-\frac np+1}}
\,{\rm d}t{\rm d}x\\
&=
\left(n-\frac np\right)
\int_0^\infty
t^{\frac np-n}
\int_{|x|<t}|f(x)|
\,\frac{{\rm d}x{\rm d}t}t\\
&=
\left(n-\frac np\right)
\|f\|_{{\rm L}{\mathcal M}^p_{1,1}},
\end{align*}
as desired.

{\bf Acknowledgements.} 
The authors are thankful to Professor Yoshihiro Sawano, Professor Toru Nogayama and Dr.\ Kazuki Kobayashi for their careful reading the paper and giving very helpful comments and advice.
The first-named author (N.H.) was supported by the Grant-in-Aid for Research Activity Start-up (No. 23K19013) and the Grant-in-Aid for JSPS Fellows (No. 25KJ0222).
The second author (M.I.) is supported by JSPS, the Grant-in-Aid for Scientific Research (C) (No. 25H01453) and the Grant-in-Aid for Transformative Research Areas (B) (No. 23K03174) and JST CREST, No. JPMJCR1913.

\bibliographystyle{amsplain}

\end{document}